\theoremstyle{plain}
    \newtheorem{lemma}{Lemma}
    \newtheorem{theorem}{Theorem}
\DeclarePairedDelimiter\ceil{\lceil}{\rceil}
\DeclarePairedDelimiter\abs{\lvert}{\rvert}
\newcommand{\Z}{\mathbb{Z}}
\newcommand{\N}{\mathbb{N}}
\newcommand{\R}{\mathbb{R}}
\renewcommand{\phi}{\varphi}
\title[Lebesgue additive square problem]{A Lebesgue variant of the additive square problem}
\subjclass[2020]{26A42, 68R15} 
\keywords{Additive square problem, combinatorics on words, Lebesgue integral}
\author[I. Vukusic]{Ingrid Vukusic}
\address{I. Vukusic,
University of Waterloo,
200 University Ave W,
Waterloo, ON N2L 3G1, 
Canada}
\email{ingrid.vukusic\char'100uwaterloo.ca}
\begin{document}

\begin{abstract}
We prove that for every Lebesgue measurable function $f \colon \R \to \{v_1, \ldots, v_t\} \subseteq \R$ there exist $x \in \R$, $y>0$ such that 
\[
    \int_x^{x+y} f\, d \lambda   
    = \int_{x+y}^{x+2y} f\, d \lambda.
\]
This can be seen as a variant of the classical additive square problem for infinite words over finite alphabets. In other words, we prove that additive squares cannot be avoided in the Lebesgue setting.  
\end{abstract}

\maketitle

\section{Introduction}

The additive square problem is a relatively famous open problem in the area of combinatorics on words:
Does there exist an infinite word 
over a finite alphabet $A\subseteq \N$, such that no two consecutive blocks of the same length have the same sum? 
In other words, can we avoid a factor (i.e., a contiguous block of symbols within the word) of the shape
\[
    a_1 a_2 \cdots a_k b_1 b_2 \cdots b_k,
    \quad \text{with }
    \sum_{i = 1}^k a_i = \sum_{i = 1}^k b_i.
\]
A factor of the above shape is called an \textit{additive square} because it can be seen as a generalization of a \textit{square}, which is simply a factor of the shape $a_1 a_2 \cdots a_ka_1 a_2 \cdots a_k$ $= ww$.
In 1906, Thue \cite{Thue1906} constructed an infinite word over a 3-letter alphabet that avoids squares.
Since then, similar and other avoidability problems have received much attention.
For example, Erd\H{o}s \cite{Erdos1961} asked about the avoidability of \textit{abelian squares}, that is, factors of the shape $w \widetilde{w}$, where $\widetilde{w}$ is a permutation of $w$. (This problem has been completely solved \cite{Keranen1992}.)

The additive square problem was raised independently by several authors \cite{BrownFreedman1987, PirilloVarricchio1994, HalbeisenHungerbuhler2000}, and variations and related questions have been considered, for example, in \cite{ArdalBrownJungicVeselin2012, AuRobertsonShallit2011, Brown2012, DuMousaviRowlandSchaefferShallit2017}.
In particular, Rao \cite{Rao2015} used the construction by Cassaigne et al.\ \cite{CassaigneCurrieSchaefferShallit2014} to show that additive cubes (i.e., factors of the shape $ww'w''$, where each block has the same length and sum) can be avoided over certain alphabets of size 3. However, it is still unknown whether additive squares can be avoided.

In this paper, we consider a Lebesgue version of the additive square problem. Note that an infinite word over a finite alphabet can be interpreted as a function $g \colon \N \to \{a_1, \ldots, a_t\} \subseteq \N$, where $\N$ denotes the positive integers. In this sense, the function $g$ avoids additives squares if 
\[
    \sum_{i = m}^{m+k-1} g(i)
    \neq \sum_{i = m+k}^{m+2k-1} g(i)
    \quad \text{for all } m \in \N, k \in \N.
\]
Therefore, the natural Lebesgue variant of the additive square problem is as follows: 
Does there exist a Lebesgue measurable function $f\colon \R \to \{v_1, \ldots, v_t\} \subseteq \R$ such that
\begin{equation}\label{eq:avoid_square}
    \int_x^{x+y} f\, d \lambda  
    \neq \int_{x+y}^{x+2y} f\, d \lambda
    \quad \text{for all } x \in \R \text{ and } y >0?
\end{equation}
Here, we can interpret $f$ as an ``uncountable word'', defined over the finite alphabet $\{v_1, \ldots, v_t\}$ and indexed by $\R$. Other transfinite words have been studied in 
\cite{BoassonCarton2020, CartonChoffrut2001}, where the words are indexed over the countable ordinals (which also form an uncountable set).
In this note, we show that in the Lebesgue setting additive squares cannot be avoided; i.e., 
the answer to the question involving \eqref{eq:avoid_square} is ``no''.

\medskip

Before we present the proof, let us give a brief intuitive explanation.
Assume that we have a function $f\colon \R \to \{v_1, \ldots, v_t\} \subseteq \R$ which avoids additive squares in the sense of \eqref{eq:avoid_square}.
Then clearly $f$ cannot be constant on any interval. This means that the preimages $A_1, \ldots , A_t$ of $v_1, \ldots, v_t$ cannot contain (up to Lebesgue zero sets) any intervals. Moreover, morally speaking, the ``densities'' of the preimages have to be ``constantly changing''. But this 
contradicts the properties of the Lebesgue measure: Lebesgue's density theorem says that every Lebesgue measurable set has density 1 almost everywhere.

\section{Result and Proof}

\begin{theorem}\label{thm:lebSquares}
Let $f\colon \R \to \R$ be a Lebesgue measurable function that
takes only finitely many values in $\R$. Then there exist $x \in \R$, $y>0$ such that 
\[
    \int_x^{x+y} f\, d \lambda   
    = \int_{x+y}^{x+2y} f\, d \lambda.
\]
\end{theorem}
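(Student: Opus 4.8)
The plan is to argue by contradiction: suppose $f$ avoids additive squares, meaning
\[
    \int_x^{x+y} f\, d\lambda \neq \int_{x+y}^{x+2y} f\, d\lambda
    \quad \text{for all } x\in\R,\ y>0.
\]
Let me set up the key machinery. I'd define the function $F(x) = \int_0^x f\, d\lambda$, which is absolutely continuous since $f$ is bounded (it takes finitely many values). The avoidance condition can be rewritten: since $\int_x^{x+y} f = F(x+y)-F(x)$ and $\int_{x+y}^{x+2y} f = F(x+2y)-F(x+y)$, the condition says $F(x+2y)-2F(x+y)+F(x) \neq 0$ for all $x$ and all $y>0$. In other words, the symmetric second difference of $F$ never vanishes. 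By continuity of $F$ and a connectedness/intermediate-value argument on the quantity $\Phi(x,y) := F(x+2y)-2F(x+y)+F(x)$, which is continuous in $(x,y)$, this second difference must have a constant sign on the connected region $\{y>0\}$. Without loss of generality assume $\Phi(x,y) > 0$ for all $x\in\R$, $y>0$; this says $F$ is \emph{midpoint-convex} (strictly), hence convex, and in fact strictly convex.

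Now I'd exploit the contradiction between convexity of $F$ and the fact that $F' = f$ takes only finitely many values almost everywhere. A strictly convex absolutely continuous function has a derivative that is (almost everywhere) strictly increasing in the sense that $f(x_1) \le f(x_2)$ whenever $x_1 < x_2$ off a null set—more precisely, the distributional derivative $f = F'$ must be nondecreasing almost everywhere. But a nondecreasing function taking only finitely many values $v_1 < \cdots < v_t$ must be a step function that is eventually constant: there exist thresholds so that $f$ equals $v_1$ on some left ray and $v_t$ on some right ray (up to null sets). On any interval where $f$ is constant, say $f \equiv v_j$, the second difference $\Phi(x,y)$ for small $y$ localized inside that interval equals $0$, since $F$ is affine there. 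This directly contradicts $\Phi > 0$. This is the crux: convexity forces monotonicity of $f$, and finitely-many-valued monotonicity forces local constancy, which kills the strict inequality.

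The main obstacle I anticipate is making the sign-constancy step rigorous, and more importantly justifying that strict second-difference positivity really forces $f$ to be monotone almost everywhere rather than merely forcing $F$ to be convex in a weak sense. The subtlety is that convexity of $F$ gives that $f$ has a monotone representative, but one must handle the almost-everywhere ambiguity of $f$ carefully and ensure that the ``locally constant'' conclusion isn't spoiled by redefinition on null sets. The intended proof in the paper instead invokes the Lebesgue density theorem (as hinted in the introduction), which suggests a cleaner route: the preimages $A_1,\dots,A_t$ of the values cannot contain intervals (else $f$ is locally constant, giving $\Phi=0$), yet density theory forces each $A_j$ to have density $1$ at almost every one of its points. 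I would reconcile my convexity approach with the density viewpoint by noting that at a density point of $A_j$, short blocks on either side have nearly equal integrals, so $\Phi(x,y)\to 0$ as $y\to 0^+$; combined with strict positivity and continuity this should force the averaged densities of $f$ to vary monotonically, again contradicting finiteness of the value set.

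I expect the density-theorem argument to be the more robust path, so I would likely present the proof around it: pick a density point $x_0$ of the top-value set $A_t$, show that for small $y$ the left block $\int_{x_0-y}^{x_0} f$ is close to $v_t\cdot y$ while controlling the right block, and derive that the sign of $\Phi$ must flip as $x_0$ ranges over density points of different value-sets, contradicting the established constant sign of $\Phi$ on $\{y>0\}$.
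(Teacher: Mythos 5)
Your main argument (the convexity route) is correct, and it is a genuinely different proof from the paper's. The shared step is the first one: you note that $\Phi(x,y)=F(x+2y)-2F(x+y)+F(x)$ is continuous and nonvanishing on the connected region $\R\times(0,\infty)$, hence of constant sign; this is exactly the paper's Lemma~\ref{lem:alwaysSmallerOrLarger}, proved there by an explicit straight-line interpolation between two configurations (the same intermediate-value idea). After that the proofs diverge. The paper inducts on the number of values, normalizes $0=v_1<\cdots<v_t=1$, applies the Lebesgue density theorem to get a short interval $I$ with $\int_I f\,d\lambda\le \tfrac{c}{6}\abs{I}$ where $c=\int_1^2 f\,d\lambda>0$, and then combines the monotonicity of the shifted integrals $\int_{I_k}f\,d\lambda$ with a covering count ($N\le 3/\abs{I}$ shifts meet $[1,2]$) to reach $c\le c/2$. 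You instead read the constant sign of $\Phi$ as strict midpoint convexity of the Lipschitz function $F(x)=\int_0^x f\,d\lambda$, hence (by continuity) strict convexity; then $F'_+$ is nondecreasing and equals $f$ a.e.\ by the Lebesgue differentiation theorem, and a nondecreasing function taking finitely many values off a null set must be constant on some nondegenerate interval $J$, so $F$ is affine on $J$ and $\Phi(x,y)=0$ whenever $[x,x+2y]\subset J$, a contradiction. The a.e.\ ambiguity you worry about is harmless, since $\Phi$ only sees the a.e.\ class of $f$: work with $g=F'_+$; each $g^{-1}\bigl((v_j,v_{j+1})\bigr)$ is an interval of measure zero, hence at most a point, and each $g^{-1}(v_j)$ is an interval, so finitely many intervals cover $\R$ up to finitely many points and one of them is nondegenerate. (Quicker still: strict convexity makes $F'_+$ strictly increasing, hence injective, which is impossible for a function taking finitely many values off a null set.) Your route eliminates both the induction and the counting argument; note that the two proofs rest on tools of comparable depth, as the density theorem is the indicator-function case of the differentiation theorem.

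One correction: abandon your closing plan to ``present the proof around'' density points rather than convexity. At a density point $x_0$ of the top-value set $A_t$ you only obtain a one-sided bound $\Phi(x_0-y,y)\le \varepsilon(v_t-v_1)y$ for small $y$, and symmetrically at density points of $A_1$; neither estimate pins down a sign, so no sign flip is forced, and there is no contradiction with strict positivity of $\Phi$, since a strictly positive $\Phi$ may perfectly well be $o(y)$ as $y\downarrow 0$. The paper's use of the density theorem is of a different, quantitative kind: density produces an interval with small integral, which then feeds the covering count, not a sign change. The convexity argument is the one that closes; present that.
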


We start by arguing that if all values of ``consecutive integrals''
were distinct, they would either have to be always strictly increasing from left to right, or strictly decreasing.

\begin{lemma}\label{lem:alwaysSmallerOrLarger}
Let $f \colon \R \to \R$ be a Lebesgue measurable and bounded function. Assume that 
\[
    \int_x^{x+y} f\, d \lambda \neq \int_{x+y}^{x+2y} f d \lambda
    \quad \text{for all } x\in \R \text{ and } y>0.
\]
Then either
\[
    \int_x^{x+y} f\, d \lambda 
    < \int_{x+y}^{x+2y} f\, d \lambda 
    \quad \text{ for all } x \in \R \text{ and } y>0
\]
or 
\[
    \int_x^{x+y} f\, d \lambda 
    > \int_{x+y}^{x+2y} f\, d \lambda 
    \quad \text{ for all } x \in \R \text{ and } y>0.
\]
\end{lemma}
\begin{proof}
This follows from a simple continuity argument. 
To be precise, 
assume the opposite, i.e., %hii
\begin{equation}\label{eq:assIntsNeq}
    \int_x^{x+y} f\, d \lambda 
    \neq \int_{x+y}^{x+2y} f\, d \lambda 
    \quad \text{ for all } x \in \R \text{ and } y>0
\end{equation}  
and that there exist $a,c \in \R$, $b,d>0$ such that
\begin{equation}\label{eq:assInts}
    \int_a^{a+b} f\, d \lambda 
    < \int_{a+b}^{a+2b} f d \lambda 
    \quad \text{and} \quad 
    \int_c^{c+d} f\, d \lambda 
    > \int_{c+d}^{c+2d} f d \lambda.
\end{equation}
Consider the function
\[
    F(t) 
    := \int_{a+t(c-a)}^{a+b+t(c+d-a-b)} f\, d \lambda - \int_{a+b+t(c+d-a-b)}^{a+2b+t(c+2d-a-2b)} f\, d \lambda,
    \quad \text{for } t \in [0,1].
\]
Note that by \eqref{eq:assInts}, we have $F(0)<0$ and $F(1)>0$. 
Moreover, since $f$ is bounded, $F$ is continuous.
Therefore, there exists $t_0 \in (0,1)$ such that $F(t_0) = 0$. But this means
\[
    \int_{a+t_0(c-a)}^{a+t_0(c-a) + b+t_0(d-b)} f\, d \lambda 
    = \int_{a+t_0(c-a) + b+t_0(d-b)}^{a+t_0(c-a) + 2(b+t_0(d-b))} f\, d \lambda,
\]
which contradicts the assumption \eqref{eq:assIntsNeq}, as $a+t_0(c-a) \in \R$ and $b+t_0(d-b) = (1-t_0)b + t_0 d >0$.
\end{proof}

Next, we recall Lebesgue's density theorem; for an elementary proof see, e.g., \cite[Appendix D]{RooijSchikhof1982}.

\begin{lemma}[Lebesgue's density theorem]\label{lem:lebDensity} 
%For a proof, see e.g. Appendix D in van Rooij, and Schikhof, A second course on real functions.
Let $A \subset \R$. Then for almost all $a \in A$ (in a Lebesgue measure sense) we have
\[
	\lim_{r \downarrow 0} \frac{\lambda(A \cap (a-r, a+r))}{2 r} = 1.
\]
\end{lemma}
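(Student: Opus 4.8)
The plan is to deduce the statement from the \emph{Vitali covering lemma}, the one covering result that makes density arguments work on the line. Since $A \cap (a-r,a+r) \subseteq (a-r,a+r)$, the quotient in the statement never exceeds $1$, so its $\limsup$ as $r \downarrow 0$ is automatically at most $1$; the entire content is to show that the lower limit is at least $1$ for almost every $a \in A$. I would phrase this through the bad sets
\[
    E_\alpha := \Bigl\{ a \in A : \liminf_{r \downarrow 0} \frac{\lambda(A \cap (a-r,a+r))}{2r} < \alpha \Bigr\}, \qquad \alpha \in (0,1) \cap \Q,
\]
and show $\lambda^\ast(E_\alpha) = 0$ for each such $\alpha$, working with outer measure $\lambda^\ast$ since $E_\alpha$ need not be measurable a priori. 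Taking the union over a sequence $\alpha_n \uparrow 1$ then forces the lower limit to be $\ge 1$ almost everywhere, which combined with the trivial upper bound yields the limit $1$. A harmless further reduction is to intersect $A$ with $(-N,N)$ and take a union over $N \in \N$, so that I may assume $\lambda^\ast(E_\alpha) < \infty$.

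Fix $\alpha$ and abbreviate $E := E_\alpha$. Given $\eps > 0$, I would first choose an open set $U \supseteq E$ with $\lambda(U) \le \lambda^\ast(E) + \eps$. By the very definition of the $\liminf$, each point $a \in E$ admits arbitrarily small radii $r>0$ with
\[
    \lambda\bigl(A \cap (a-r,a+r)\bigr) < \alpha \cdot 2r,
\]
and, discarding the large radii, I may keep only those intervals $I = (a-r,a+r)$ that additionally satisfy $I \subseteq U$. The resulting family is a Vitali cover of $E$, so the Vitali covering lemma produces a countable \emph{disjoint} subfamily $\{I_j\}_{j}$, say $I_j = (a_j - r_j, a_j + r_j)$, with $\lambda^\ast\bigl(E \setminus \bigcup_j I_j\bigr) = 0$.

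The conclusion is then a short self-improving estimate. Using countable subadditivity of $\lambda^\ast$ together with $E \subseteq A$,
\[
    \lambda^\ast(E) \le \sum_j \lambda^\ast(E \cap I_j) \le \sum_j \lambda(A \cap I_j) < \alpha \sum_j 2 r_j = \alpha \sum_j \lambda(I_j),
\]
where the strict inequality is exactly the defining property of the selected intervals. Since the $I_j$ are disjoint and contained in $U$, their total length is at most $\lambda(U) \le \lambda^\ast(E) + \eps$, so $\lambda^\ast(E) < \alpha\bigl(\lambda^\ast(E) + \eps\bigr)$. Letting $\eps \downarrow 0$ gives $\lambda^\ast(E) \le \alpha\, \lambda^\ast(E)$; as $\alpha < 1$ and $\lambda^\ast(E) < \infty$, this forces $\lambda^\ast(E) = 0$, completing the argument.

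The genuine obstacle is the Vitali covering lemma itself; everything else is bookkeeping. On the line, however, it can be established by an elementary greedy procedure (repeatedly selecting an interval of near-maximal length disjoint from those already chosen), which is presumably the route taken in the elementary treatment cited above. Two minor points require care, and I would flag them explicitly: the bad sets $E_\alpha$ are not known to be measurable, which is why outer measure is used throughout and subadditivity rather than additivity is invoked; and one must ensure that the chosen intervals lie inside $U$, so that their lengths sum to at most $\lambda(U)$, which is precisely what closes the self-improving inequality.
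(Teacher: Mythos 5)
Your argument is correct, but note that the paper does not actually prove this lemma: Lebesgue's density theorem is quoted there as a known classical fact, with a pointer to van Rooij and Schikhof (Appendix D) for an elementary proof. What you give is the standard textbook derivation from the Vitali covering lemma, and it is sound: the $\limsup \le 1$ bound is trivial, the bad sets $E_\alpha$ are correctly handled with outer measure (they are indeed not known to be measurable a priori), the reduction to finite outer measure is legitimate, the open envelope $U$ with $\lambda(U) \le \lambda^\ast(E_\alpha) + \eps$ together with disjointness of the Vitali subfamily closes the self-improving inequality $\lambda^\ast(E_\alpha) \le \alpha\,(\lambda^\ast(E_\alpha)+\eps)$, and the union over rational $\alpha$ increasing to $1$ finishes the argument. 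Two caveats are worth recording. First, the lemma as stated allows an arbitrary, possibly non-measurable $A \subset \R$, so the quantities $\lambda(A \cap I_j)$ in your chain of inequalities should likewise be read as outer measures $\lambda^\ast(A \cap I_j)$; nothing breaks, since only monotonicity and countable subadditivity of $\lambda^\ast$ are used, and the Vitali covering theorem does hold for arbitrary sets of finite outer measure. Second, as you acknowledge, the entire difficulty is concentrated in the Vitali covering lemma, which you invoke as a black box (your remark that a greedy selection with the standard enlargement estimate proves it is accurate); so your write-up, like the paper's, ultimately rests on a citation --- the paper simply cites one level higher, namely the density theorem itself. That is a reasonable choice for the paper, since the density theorem is classical and not where the novelty lies; your route buys a self-contained derivation of the density statement, at the cost of either proving Vitali's lemma or citing it in the paper's place.
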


Now we are ready to prove that in the Lebesgue setting additive squares cannot be avoided.

\begin{proof}[Proof of Theorem~\ref{thm:lebSquares}]

The proof is by induction on the number of values of $f$.
If $f \colon \R \to \{ v_1\}$, there is nothing to do.
Now assume that the theorem is proven for all $f \colon \R \to \{v_1, \ldots, v_{t-1}\}$ for some $t\geq 2$.
Let $f \colon \R \to \{v_1, \ldots , v_t\}$.
After a linear transformation, we may assume that $0 = v_1 < v_2 < \dots < v_t = 1$.
Let $A_1, \ldots, A_t$ be the preimages of $v_1, \ldots, v_t$.
We may assume $\lambda(A_i) > 0$ for all $i$ because otherwise we can use the induction hypothesis.
We want to show that there exist $x \in \R$, $y>0$ such that 
\begin{equation}\label{eq:wts}
    \int_x^{x+y} f\, d \lambda   
    = \int_{x+y}^{x+2y} f\, d \lambda.
\end{equation}
Suppose \eqref{eq:wts} fails to hold for all $x,y$.
Then by Lemma~\ref{lem:alwaysSmallerOrLarger} we may assume w.l.o.g.
\begin{equation}\label{eq:integrals_LeftLargerThanRight}
    \int_x^{x+y} f\, d \lambda 
    > \int_{x+y}^{x+2y} f\, d \lambda 
    \quad \text{ for all } x \in \R \text{ and } y>0.
\end{equation}
Since $\lambda(A_1)>0$, there must exist $n \in \Z$ with 
$\lambda(A_1 \cap [n,n+1]) > 0$. 
Assume w.l.o.g.\ that $n = 0$, i.e., $\lambda(A_1 \cap [0,1]) > 0$.
By \eqref{eq:integrals_LeftLargerThanRight} we have 
$
    \int_1^2 f\, d \lambda
    > \int_2^3 f\, d \lambda
    \geq 0,
$
so we can set 
\[
    \int_1^2 f\, d \lambda =:c >0.
\]
Since $\lambda(A_1 \cap [0,1]) > 0$, Lemma~\ref{lem:lebDensity} implies that there exist
$x \in (0,1)$ and $r >0$ such that
\[
    \lambda(A_1 \cap (x-r, x+r)) 
    \geq 2r ( 1 - \frac{c}{6}).
\]
Now let $I = [a,b] := [x-r, x+r]$.
Then the above inequality becomes $\lambda(A_1 \cap I) \geq \abs{I} ( 1 - \frac{c}{6})$ and, since $A_1$ is the preimage of $0$, we obtain
\[
    \int_{I} f\, d \lambda
    \leq 0 \cdot \lambda(A_1 \cap \abs{I}) + 1 \cdot (\abs{I} - \lambda(A_1 \cap \abs{I})) 
    \leq \frac{c}{6} \cdot \abs{I}.
\]
Let $d : = \abs{I}$ and we consider the shifts of the interval $I$ given by $I_k := [a+kd, b+ kd]$ for $k = 0, 1, 2, \ldots$.
By \eqref{eq:integrals_LeftLargerThanRight}, we have
\[
    \int_{I} f\, d \lambda
    = \int_{I_0} f\, d \lambda
    > \int_{I_1} f\, d \lambda
    > \int_{I_2} f\, d \lambda
    > \dots.
\]
Let $N$ be the number of intervals $I_k$ that intersect $[1,2]$.
Then
\[
    N \leq \ceil{1/\abs{I}} + 1
    \leq \frac{3}{\abs{I}}.
\]
Assume that $I_{k_0}, I_{k_0 + 1}, \ldots , I_{k_0 + N - 1}$ are the intervals that intersect $[1,2]$. Then we have
\begin{align*}
    c
    &= \int_1^2 f\, d \lambda
    \leq \int_{I_{k_0}} f\, d \lambda + \dots + \int_{I_{k_0 + N - 1}} f\, d \lambda \\
    &\leq N \cdot \int_{I} f\, d \lambda
    \leq \frac{3}{\abs{I}} \cdot \frac{c\cdot \abs{I}}{6} 
    = \frac{c}{2};
\end{align*}
a contradiction.

\end{proof}

\section*{Acknowledgments}

The author would like to thank Jeffrey Shallit for suggesting the problem, for his encouragement, 
and for several much-appreciated comments on the nuances of English grammar.
This work was supported by the NSERC under Grant 2024-03725.

\bibliographystyle{habbrv}
\bibliography{refs}

\end{document}